\newcommand*{\fplus}{\genfrac{}{}{0pt}{}{}{+}}
\newcommand*{\fdots}{\genfrac{}{}{0pt}{}{}{\cdots}}
\newcommand*{\fminus}{\genfrac{}{}{0pt}{}{}{-}}
\newcommand{\qrfac}[2]{{\left({#1}; q\right)_{#2}}} 
\newcommand{\pqrfac}[3]{{\left({#1};#3\right)_{#2}}}
\renewcommand{\Im}{\operatorname{Im}}
\mathchardef\pFcomma=\mathcode`, % keep a copy of the comma
\newcommand*\rPhis[6]{%
  \begingroup
  \begingroup\lccode`~=`,
    \lowercase{\endgroup\def~}{\pFcomma\mkern\pFqskip}%
  \mathcode`,=\string"8000
  {}_{#1}\phi_{#2}\Biggl[\genfrac..{0pt}{}{#3}{#4};#6, #5\Biggr]%
  \endgroup
}
\newtheorem{Theorem}{Theorem}[section]
\newtheorem{Corollary}[Theorem]{Corollary} %\theoremstyle{definition}
\newtheorem*{rems}{Remarks} %\numberwithin{equation}{section}
\newenvironment{Remarks}{\begin{rems}\normalfont}{\end{rems}}
\newtheorem*{rem}{Remark} %\numberwithin{equation}{section}
\newenvironment{Remark}{\begin{rem}\normalfont}{\end{rem}}
\numberwithin{equation}{section}
\dedicatory{Dedicated to Bruce Berndt on the occasion
of his 80th birthday}
\begin{document} 
\title[Ramanujan's Entry II.16.12]{On Entry~II.16.12:\\ A continued fraction of Ramanujan}% that extends Ramanujan's continued fractions}

\author[G.~Bhatnagar]{Gaurav Bhatnagar
}
\address{Fakult\"at f\"ur Mathematik,  Universit\"at Wien \\
Oskar-Morgenstern-Platz 1, 1090 Wien, Austria.}

\email{bhatnagarg@gmail.com}
%\urladdr{http://www.gbhatnagar.com}

\author[M.~E.~H.~Ismail]{Mourad~E.~H.~Ismail}
\address{Department of Mathematics, University of Central Florida, Orlando, FL 32816, USA  }
%\thanks{**Research partly supported by NSF Grants DMS***. }
\email{ismail@math.ucf.edu}

\date{\today}

\begin{abstract}
We study a continued fraction due to Ramanujan, that he recorded as Entry 12 in Chapter 16 of his second notebook. It is presented in Part III of Berndt's volumes on Ramanujan's notebooks. 
 We give two  alternate approaches to proving Ramanujan's Entry 12, one using a method of Euler, and another using the theory of orthogonal polynomials. We consider a natural generalization of Entry 12 suggested by the theory of orthogonal polynomials. 
\end{abstract}

%\begin{abstract} %Version 1
%We study a continued fraction due to Ramanujan, that he recorded as Entry 12 in Chapter 16 of his second notebook. It is presented in Part III of Berndt's volumes on Ramanujan's notebooks. We consider orthogonal polynomials defined by a three-term recurrence relation 
%obtained from Entry 12.  The orthogonality measure of the denominator polynomials has an absolutely continuous component. We find generating functions, asymptotic formulas, and orthogonality relations for these polynomials, and the Stieltjes transform of the measure. We give two  alternate approaches to proving Ramanujan's Entry 12, one using a method of Euler, and another using the theory of orthogonal polynomials.
%
%\end{abstract}

\keywords{Continued fractions, Orthogonal polynomials, Entry 12 of Chapter 16, Ramanujan's second Notebook}
\subjclass[2010]{Primary 33D45; Secondary 30B70}
%30B70 Continued fractions in functions of a complex variable
%33D45 Basic orthogonal polynomials and functions (Askey-Wilson polynomials, etc.)

\maketitle

%
%\section{Temp}
%\begin{multline}
%4 \, A {\left(\frac{b q^{2 \, j + 2 \, k + 1}}{a} - 1\right)} {\left(\frac{b q^{2 \, j + 2 \, k - 1}}{a} - 1\right)} 
%{\left(\frac{b q^{2 \, k + 1}}{a} - 1\right)} {\left(x - \sqrt{x^{2} - 1}\right)}^{2} +\\
% \frac{1}{4} \, {\left({\left(c + \sqrt{c^{2} - 1}\right)} q q^{2 \, j + 2 \, k + 1} {\left(x - \sqrt{x^{2} - 1}\right)} 
% - 1\right)}\\
%  {\left({\left(c + \sqrt{c^{2} - 1}\right)} q q^{2 \, j + 2 \, k - 1} {\left(x - \sqrt{x^{2} - 1}\right)} - 
% 1\right)} B {\left(\frac{a q^{2 \, k - 1}}{b} - 1\right)} -
% \\ 2 \, {\left({\left(c + \sqrt{c^{2} - 1}\right)} q q^{2 \, j+
%  2 \, k + 1} {\left(x - \sqrt{x^{2} - 1}\right)} - 1\right)} {\left(c q^{2 \, k} - x\right)} {\left(\frac{b q^{2 \, j + 2 \, k - 1}}{a} - 1\right)} {\left(x - \sqrt{x^{2} - 1}\right)}
%\end{multline}
%
%\begin{multline}
%4 \, A {\left(\frac{b q^{2 \, j + 2 \, k + 1}}{a} - 1\right)} {\left(\frac{b q^{2 \, j + 2 \, k - 1}}{a} - 1\right)} {\left(\frac{b q^{2 \, k + 1}}{a} - 1\right)} + \\
%\frac{1}{4} \, {\left({\left(c + \sqrt{c^{2} - 1}\right)} q^{2 \, j + 2 \, k + 2} - 1\right)} {\left({\left(c + \sqrt{c^{2} - 1}\right)} q^{2 \, j + 2 \, k} - 1\right)} B {\left(\frac{a q^{2 \, k - 1}}{b} - 1\right)} - \\
%2 \, {\left({\left(c + \sqrt{c^{2} - 1}\right)} q^{2 \, j + 2 \, k + 2} - 1\right)} {\left(c q^{2 \, k} - 1\right)} {\left(\frac{b q^{2 \, j + 2 \, k - 1}}{a} - 1\right)}
%\end{multline}

\section{Introduction} %version 2: Only restricting to continued fractions
Ramanujan's Entry 12 in Chapter 16, Notebook II is stated as follows. 
Let $|q|<1,$ and $|ab|<1$. Then we have
(see Berndt \cite[p.\ 24]{Berndt1991-RN3}):
\begin{multline}
\frac{\pqrfac{a^2q^3, b^2q^3}{\infty}{q^4}}{\pqrfac{a^2q, b^2q}{\infty}{q^4}}
=\cr
\frac{1}{1-ab}\fplus\frac{(a-bq)(b-aq)}{(1-ab)(1+q^2)}\fplus 
%&
\frac{(a-bq^3)(b-aq^3)}{(1-ab)(1+q^4)}\fplus
\frac{(a-bq^5)(b-aq^5)}{(1-ab)(1+q^6)}\fplus
\fdots. 
%\cr 
\label{entry12}
\end{multline}
G.~H.~Hardy \cite[p.~XXV]{Ramanujan-CW} remarked: 
\begin{quote}
There is always more in one of Ramanujan's formulae than meets the eye, as anyone who gets to work to verify those which look the easiest will soon discover.
\end{quote}
Indeed, the verification of this beautiful continued fraction evaluation has proved to be most troublesome, and is due to the efforts of many mathematicians.  The first proof was given by Adiga, Berndt, Bhargava and Watson \cite{ABBW1985}, who further acknowledged the help they received from R.~A.~Askey and D.~M.~Bressoud.
The proof is rather complicated. 
% These authors acknowledge that R.~A.~Askey and D.~M.~Bressoud 
%gave helpful suggestions. 
% that helped them ``in proving this beautiful continued fraction.''
A few years later, Jacobsen \cite{LJ1989} clarified and simplified this proof. Ramanathan \cite{KGR1987a} gave another formal proof, via contiguous relations.  Our own proofs, and in this paper we give two, are also not without complication. 

The difficulties arise because the first term of Ramanujan's continued fraction is a bit off; it doesn't fit the pattern of the rest of the terms. Perhaps the continued fraction should have been 
$$
\frac{1}{2(1-ab)}\fplus\frac{(a-bq)(b-aq)}{(1-ab)(1+q^2)}\fplus 
\frac{(a-bq^3)(b-aq^3)}{(1-ab)(1+q^4)}\fplus\dots.
$$
Indeed, from the point of view of orthogonal polynomials, 
 the continued fraction to consider is:
\begin{multline}
\label{jfrac-e12}
H(x):= 
\frac{1-ab}{x(1-ab)+(1-ab)}\fplus\frac{(a-bq)(b-aq)}{x(1-ab)+(1-ab)q^2}\fplus 
%&
\cr
\frac{(a-bq^3)(b-aq^3)}{x(1-ab)+(1-ab)q^4}\fplus
\frac{(a-bq^5)(b-aq^5)}{x(1-ab)+(1-ab)q^6}\fplus
\fdots. 
%\cr 
\end{multline}

This is an example of   $J$-fraction, that is, a continued fraction of the form
\begin{equation*} %\label{jfrac}
\frac{A_0}{A_0x+B_0}\fminus\frac{C_1}{A_1x+B_1}\fminus\frac{C_2}{A_2x+B_2}\fminus\fdots.
\end{equation*} 
The $k$th convergent of this continued fraction is given by
$$\frac{N_k(x)}{D_k(x)} :=
\frac{A_0}{A_0x+B_0}\fminus\frac{C_1}{A_1x+B_1}\fminus\fdots \fminus \frac{C_{k-1}}{A_{k-1}x+B_{k-1}}.
$$
The numerators and denominators of the convergents are polynomials and satisfy a three-term recurrence relation. 

Using \cite[Th.~2.6.1, p.\ 35]{MI2009}, we see that 
 the recurrence relation corresponding to \eqref{jfrac-e12} is given by
\begin{multline}\label{entry12-3term}
y_{k+1}(x) = ((1-ab)x + (1-ab)q^{2k}) y_k(x) +
\cr ab(1-bq^{2k-1}/a)(1-aq^{2k-1}/b) y_{k-1}(x), \text{ for } k > 0.
\end{multline}
The numerator and denominator satisfy \eqref{entry12-3term}, with the initial values
$$N_0(x)=0, N_1(x)=1-ab;\   D_0(x)=1, D_1(x)=(1-ab)(x+1). $$ 
It is clear that $N_k(x)$ and $D_k(x)$ are polynomials, of degree $k-1$ and $k$, respectively. 
From the theory of orthogonal polynomials, it follows that the denominator polynomials are orthogonal with respect to a positive measure, provided we assume $a$ and $b$ are real numbers with different signs.

In a previous study, the first author \cite{GB2014} showed that an elementary method due to Euler 
can be used to derive all of Ramanujan's $q$-continued fractions, except for one notable exception. 
The exception was Entry 12. In Section~\ref{sec:euler}, we remedy this omission. The primary benefit 
of this approach is that it only relies on the $q$-binomial theorem. Further, the algebraic 
manipulations appear motivated. However, this approach only gives modified convergence (see Lorentzen and Waadeland~\cite{LW1992}) of the continued fraction. One needs some further restrictions on the parameters to obtain ordinary convergence. 

Next, in Section~\ref{sec:proof2}, we provide a proof of Entry 12 using what we may call the 
standard orthogonal polynomial approach. The proof requires asymptotic techniques and is obtained 
from the generating function of the numerator and denominator polynomials. Again, the proof is 
motivated, but rather surprisingly, one still needs the algebraic manipulations from Euler's method. This is because of the missing $2$ in the first denominator of \eqref{entry12}; this is what causes the complications. 
In both these proofs we can spot elements of the aforementioned proofs due to Berndt et.al., Ramanathan and Jacobsen. 

Finally, in Section~\ref{sec:stieltjes}, we compute the  value of the more general continued fraction $H(x)$,  and comment briefly on the orthogonality measure of the associated denominator polynomials. 

We have mentioned that Ramanujan's Entry 12 is slightly off. On examining our proof in 
Section~\ref{sec:proof2}, it appears that  Ramanujan would have been unable to write the continued 
fraction in terms of infinite products if 
the first denominator followed the pattern. Moreover, our proof of Section~\ref{sec:euler} begins 
with the products and shows how one can find the continued fraction expansion. If indeed this was similar to Ramanujan's own 
approach, it will explain why the first term of many of Ramanujan's continued fractions do not quite 
fit the pattern of the rest of the terms. 

%
%
%
%The objective of this paper to study this continued fraction from the point of view of the theory of orthogonal polynomials.  It turns out that many of the standard things we try also happen to be a bit out of the ordinary. Perhaps the property of Ramanujan's formulas described by Hardy just happens to be inherited by objects associated with Entry 12. 
%

We require the following definitions and notations from Lorentzen and Waadeland \cite{LW1992} and Gasper and Rahman~\cite{GR90}.
\begin{enumerate}
\item {\bf Continued fractions:} Let $\{ a_n\}_{n=1}^{\infty}$ and $\{ b_n\}_{n=0}^{\infty}$   be sequences of complex numbers, $a_n\neq 0$. Consider the sequence of mappings 
$$s_0(w)=b_0+w; s_n(w)=\frac{a_n}{b_n+w}, \text{ for } n=1, 2, \dots $$
These mappings are from ${\mathbb{C^*}}$ to ${\mathbb{C^*}}$, where ${\mathbb{C^*}}$ denotes the extended complex numbers ${\mathbb{C}}\cup \left\{\infty\right\}$.
Let $S_n(w)$ be defined as follows:
$$S_0(w)=s_0(w)=b_0+w; S_n(w)=S_{n-1}(s_n(w)).$$
Then $S_n(w)$ can be written as
$$S_n(w)=b_0+\frac{a_1}{b_1}\fplus\frac{a_2}{b_2}\fplus\fdots\fplus\frac{a_n}{b_n+w}.$$
Note that since $a_n\neq 0$, the $s_k(w)$ are non-singular linear fractional transformations.  The $S_n$ are compositions of these, and are thus also non-singular linear fractional transformations. 
 A continued fraction is an ordered pair
$\left( (\{ a_n\}, \{ b_n\}), S_n(0)\right)$, where $a_n$, $b_n$ and $S_n$ are as above. 
\item {\bf Convergence of continued fractions:} When $S_n(0)$ converges to an extended complex number $S$, we say that the continued fraction converges, and we write
$$S=b_0+\frac{a_1}{b_1}\fplus\frac{a_2}{b_2}\fplus\fdots.$$
The convergence of 
$S_n(w)$ (or even $S_n(w_n)$, for suitably chosen $w_n$) is called {\em modified convergence}. 
%\end{enumerate}
\item {\bf Factorial notation:} The {\em $q$-rising factorial} $\qrfac{a}{n}$ is defined as
$$\qrfac{a}{n} := 
\begin{cases}
1 & \text{ for } n=0\cr
(1-a)(1-aq)\cdots (1-aq^{n-1}) & \text{ for } n=1, 2, \dots.
\end{cases}
$$ In addition
$$\qrfac{a}{\infty} := \prod_{k=0}^\infty (1-aq^{k}) \text{ for } |q|<1.$$
We use the short-hand notation
\begin{align*}
\qrfac{a_1, a_2,\dots, a_r}{k} &:= \qrfac{a_1}{k} \qrfac{a_2}{k}\cdots
\qrfac{a_r}{k}.
\end{align*}
\item {\bf Basic hypergeometric series  (or $_r\phi_s$ series):}
 This series is of the form
\begin{equation*}
_{r}\phi_s \left[\begin{matrix} 
a_1,a_2,\dots,a_r \\
b_1,b_2,\dots,b_s\end{matrix} ; q, z
\right] :=
\sum_{k=0}^{\infty} \frac{\qrfac{a_1,a_2,\dots, a_r}{k}}{\qrfac{q, b_1,b_2,\dots, b_s}{k}}
\left( (-1)^kq^{\binom k2}\right)^{1+s-r} z^k.
\end{equation*}
When $r=s+1$, the series converges for $|z|<1$.
\end{enumerate}

\section{Entry 12 by Euler's method}\label{sec:euler}
In this section, we give a formal proof of  \eqref{entry12} using the approach  
Euler~\cite{LE1788-616} used to find the continued fraction of a divergent series. 
We use the elementary identity:
\begin{equation}\label{div-1step}
\frac{U}{V}=1+\frac{U-V}{V}
\end{equation}
to \lq divide' two series of the form $1+a_1z+a_2z^2+\cdots$. A repeated application leads to 
a continued fraction, much like Euclid's algorithm is used to find a continued fraction of a ratio 
of two numbers. 
We call this Euler's method. In \cite{GB2014}, all but one of Ramanujan's 
$q$-continued fractions from Chapter 16 of Ramanujan's second notebook \cite{Berndt1991-RN3} 
and from the Lost Notebook
\cite[Ch.\ 6]{AB2009} were proved using this method, or obtained as special cases. 
The only exception was Entry 12 of 
\cite[Ch.\ 16]{Berndt1991-RN3}. The purpose of this section is to rectify this omission.

Define, for $s=0, 1, 2, 3, \dots$
\begin{equation}\label{def-ds}
D(s):= \sum_{k=0}^{\infty}\frac{\pqrfac{bq^{2s-1}/a, -bq/a}{k}{q^2}}
{\pqrfac{q^2, -q^{2s}}{k}{q^2}}(a^2q)^{k}
.
\end{equation}

%For presentation
%Define, for $s=0, 1, 2, 3, \dots$
%\begin{equation*} %\label{def-ds}
%D(s):= \sum_{k=0}^{\infty}\frac{\pqrfac{bq^{2s-1}/a, -bq/a}{k}{q^2}}
%{\pqrfac{q^2, -q^{2s}}{k}{q^2}}(a^2q)^{k}
%=
%{ \displaystyle \rPhis{2}{1}{ bq^{2s-1}/a, -{bq}/{a}}{-q^{2s}}{a^2q}{q^2}}
%\end{equation*}

\begin{Theorem}\label{th:entry12-euler} Let $D(s)$ be as above. Then, for 
$|q|<1$ and $|a|<1$, and $s=0, 1, 2, 3, \dots$, we have
\begin{align}
&\frac{\pqrfac{a^2q^3, b^2q^3}{\infty}{q^4}}{\pqrfac{a^2q, b^2q}{\infty}{q^4}}
=\cr
\frac{1}{1-ab}\fplus &
\frac{(a-bq)(b-aq)}{(1-ab)(1+q^2)}\fplus 
\frac{(a-bq^3)(b-aq^3)}{(1-ab)(1+q^4)}\fplus \fdots \cr
&\fplus \frac{(a-bq^{2s-1})(b-aq^{2s-1})}{(1-ab)(1+q^{2s})}\fplus
\frac{(a-bq^{2s+1})(b-aq^{2s+1})}{(1+q^{2s+2})\cfrac{D(s+1)}{D(s+2)}}.
 \label{entry12-prop}
\end{align}
\end{Theorem}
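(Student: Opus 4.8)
The plan is to prove \eqref{entry12-prop} by induction on $s$, with the inductive step powered by a single three-term contiguous relation for the family $D(s)$ that plays exactly the role of Euler's dividing step \eqref{div-1step}. Before the induction I would record the closed forms the $q$-binomial theorem supplies. Viewing $D(s)$ as a $_2\phi_1$ in base $q^2$, the two numerator factors of $D(1)$ combine, since $\pqrfac{bq/a}{k}{q^2}\pqrfac{-bq/a}{k}{q^2}=\pqrfac{b^2q^2/a^2}{k}{q^4}$ and $\pqrfac{q^2}{k}{q^2}\pqrfac{-q^2}{k}{q^2}=\pqrfac{q^4}{k}{q^4}$, so the $q$-binomial theorem gives $D(1)=\pqrfac{b^2q^3}{\infty}{q^4}/\pqrfac{a^2q}{\infty}{q^4}$. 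The same theorem expands the left-hand product of \eqref{entry12-prop} as a product of two $_1\phi_0$ series, the input to Euler's method.

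The engine of the proof is the contiguous relation
\begin{multline*}
(1+q^{2s})(1+q^{2s+2})\,D(s)=(1-ab)(1+q^{2s})(1+q^{2s+2})\,D(s+1)\\
+(a-bq^{2s+1})(b-aq^{2s+1})\,D(s+2),
\end{multline*}
which I would prove by comparing the coefficient of $(a^2q)^k$ on the two sides. The key simplification is that the $k$th term of each $D(s)$ is homogeneous of degree $2k$ in $a$ and $b$, so the degrees already match and only a rational identity in $q$ remains; this collapses by the elementary factorizations of $(1-bq^{2s-1}/a)$ and $(1+q^{2s})$. Read through \eqref{div-1step} with $U=D(s)$ and $V=D(s+1)$, the relation says precisely that $D(s)-(1-ab)D(s+1)$ equals the displayed multiple of the next series $D(s+2)$, which is one dividing step of Euler's method.

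Dividing this relation, with $s$ replaced by $s+1$, by $(1+q^{2s+4})D(s+2)$ and rearranging yields
\begin{equation*}
(1+q^{2s+2})\frac{D(s+1)}{D(s+2)}=(1-ab)(1+q^{2s+2})+\frac{(a-bq^{2s+3})(b-aq^{2s+3})}{(1+q^{2s+4})\cfrac{D(s+2)}{D(s+3)}}.
\end{equation*}
This is exactly the assertion that the final denominator at level $s$ unfolds into the last two partial quotients at level $s+1$, so the finite continued fraction in \eqref{entry12-prop} takes the same value for $s$ and for $s+1$. It then remains only to anchor the induction at $s=0$. Substituting the contiguous relation at $s=0$, where $1+q^{0}=2$, into the two-term continued fraction reduces the $s=0$ case to the single evaluation
\begin{equation*}
2D(0)=\frac{\pqrfac{b^2q}{\infty}{q^4}}{\pqrfac{a^2q^3}{\infty}{q^4}}+(1-ab)\frac{\pqrfac{b^2q^3}{\infty}{q^4}}{\pqrfac{a^2q}{\infty}{q^4}},
\end{equation*}
using the value of $D(1)$ and the product form of the left-hand side of \eqref{entry12-prop}.

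I expect the main obstacle to sit entirely at this boundary value $s=0$. The contiguous relation, once the homogeneity observation is in place, is essentially bookkeeping; but $D(0)$ carries the denominator factor $\pqrfac{-1}{k}{q^2}=2\pqrfac{-q^2}{k-1}{q^2}$, and it is this factor $2$---the source of Ramanujan's missing $2$---that both forces the asymmetric two-term right-hand side above and requires summing $D(0)$ as a sum of \emph{two} infinite products rather than one. Carrying out that $_2\phi_1$ evaluation cleanly, through an even/odd splitting or a suitable transformation rather than by brute force, is the delicate step on which the whole argument turns.
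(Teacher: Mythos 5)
Your architecture is sound and, at its core, coincides with the paper's: your contiguous relation, once divided by $(1+q^{2s+2})D(s+1)$, is exactly the paper's recursion \eqref{entry12-recursion}, which the paper proves by one application of \eqref{div-1step} (a computation equivalent to your coefficient comparison), and your induction is the paper's iteration. Your reduction of the base case is also correct as algebra: the $s=0$ relation gives $2D(0)/D(1)-2(1-ab)$ for the tail $\frac{(a-bq)(b-aq)D(2)}{(1+q^2)D(1)}$, and with $D(1)=\pqrfac{b^2q^3}{\infty}{q^4}/\pqrfac{a^2q}{\infty}{q^4}$ the $s=0$ case of \eqref{entry12-prop} becomes equivalent to your two-product evaluation of $2D(0)$, which is indeed true. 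The genuine gap is that this evaluation is asserted but never proved: you close by calling it ``the delicate step on which the whole argument turns,'' so the proposal is incomplete precisely at its self-identified crux. A secondary imprecision: the coefficients $1-ab$ and $(a-bq^{2s+1})(b-aq^{2s+1})$ are \emph{not} homogeneous in $(a,b)$, so ``comparing coefficients of $(a^2q)^k$'' does not leave a rational identity in $q$ alone. One must work in $z=a^2q$ and $\beta=b/a$ (with $ab=z\beta/q$ and $(a-bq^{2s+1})(b-aq^{2s+1})=-zq^{2s}(1-\beta q^{2s+1})(1-\beta q^{-2s-1})$), and the comparison then couples the $k$th coefficient of $D(s)$ to the $k$th and $(k-1)$th coefficients of $D(s+1)$ and the $(k-1)$th of $D(s+2)$ — a polynomial identity in $\beta$ and $q$.

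The gap is fillable, and by exactly the technique you already propose, so the boundary value is in fact no harder than your contiguous relation. Using $\pqrfac{q^2}{k}{q^2}\pqrfac{-1}{k}{q^2}=2\,\pqrfac{q^4}{k}{q^4}/(1+q^{2k})$ and $\pqrfac{b/aq,\, -bq/a}{k}{q^2}=(1-\beta/q)\left(1+\beta q^{2k-1}\right)\pqrfac{\beta^2q^2}{k-1}{q^4}$, the coefficient of $z^k$ in $2D(0)$ is $(1-\beta/q)(1+\beta q^{2k-1})(1+q^{2k})\,\pqrfac{\beta^2q^2}{k-1}{q^4}/\pqrfac{q^4}{k}{q^4}$; expanding your right-hand side by the $q$-binomial theorem (absorbing $1-ab=1-z\beta/q$ into a shift $k\to k-1$) and dividing out $\pqrfac{\beta^2q^2}{k-1}{q^4}/\pqrfac{q^4}{k}{q^4}$ reduces everything to
\begin{equation*}
(1-\beta/q)\left(1+\beta q^{2k-1}\right)\left(1+q^{2k}\right)
=\left(1-\beta^{2}q^{4k-2}\right)+q^{2k}\left(1-\beta^{2}/q^{2}\right)-\frac{\beta}{q}\left(1-q^{4k}\right),
\end{equation*}
which both sides verify on expansion. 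With this display supplied, your proof closes and is a legitimate variant: the paper deliberately avoids evaluating $D(0)$, instead writing the product side as $D(1)$ divided by the companion series with argument $a^2q^3$ (which is not a member of the $D(s)$ family) in \eqref{entry12-pf-step1}, and performing one further Euler division in \eqref{entry12-pf-step2}; your route makes Ramanujan's ``missing $2$'' visible in the sum of \emph{two} infinite products for $2D(0)$, at the cost of the extra evaluation you left open.
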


%For presentation
%\subsection*{Theorem}\label{th:entry12-euler} %Let $D(s)$ be defined in \eqref{def-ds}. Then, 
%For 
%$|q|<1$ and $|a|<1$, and $s=0, 1, 2, 3, \dots$, we have
%\begin{align*}
%&\frac{\pqrfac{a^2q^3, b^2q^3}{\infty}{q^4}}{\pqrfac{a^2q, b^2q}{\infty}{q^4}}
%=\cr
%\frac{1}{1-ab}\fplus &
%\frac{(a-bq)(b-aq)}{(1-ab)(1+q^2)}\fplus 
%\frac{(a-bq^3)(b-aq^3)}{(1-ab)(1+q^4)}\fplus \fdots \cr
%&\fplus \frac{(a-bq^{2s-1})(b-aq^{2s-1})}{(1-ab)(1+q^{2s})}\fplus
%\frac{(a-bq^{2s+1})(b-aq^{2s+1})}{(1+q^{2s+2})\cfrac{D(s+1)}{D(s+2)}}
% \label{entry12-prop}
%\end{align*}
%%\end{Theorem*}

\begin{Remark}
From Theorem~\ref{th:entry12-euler}, we immediately get modified convergence of the infinite continued fraction in Entry 12. For ordinary convergence, we need a further condition and a further argument. See \cite{GB2014} for an example in the context of Ramanujan's continued fractions. 
 \end{Remark}
\begin{proof}
We prove \eqref{entry12-prop} in three steps. First observe that
\begin{equation}\label{entry12-pf-step1}
\frac{\pqrfac{a^2q^3, b^2q^3}{\infty}{q^4}}{\pqrfac{a^2q, b^2q}{\infty}{q^4}}
=
 \frac{\displaystyle
\sum_{k=0}^\infty \frac{\pqrfac{bq/a, -bq/a}{k}{q^2} }{\pqrfac{q^2, -q^2}{k}{q^2}} (a^2q)^k
}
{\displaystyle
\sum_{k=0}^\infty \frac{\pqrfac{b/aq, -b/aq}{k}{q^2} }{\pqrfac{q^2, -q^2}{k}{q^2}} (a^2q^3)^k
} 
%= \frac{D}{N} \text{ (say)}.
\end{equation}
To show this we use the $q$-binomial theorem \cite[Equation (I.3)]{GR90}. We can see that
\begin{align*}
\text{LHS of \eqref{entry12-pf-step1}} & =
\frac{\pqrfac{b^2q^3}{\infty}{q^4}/\pqrfac{a^2q}{\infty}{q^4}}
{\pqrfac{b^2q}{\infty}{q^4}/\pqrfac{a^2q^3}{\infty}{q^4}}
\cr
&=
\frac{\displaystyle
\sum_{k=0}^\infty \frac{\pqrfac{(bq/a)^2}{k}{q^4} }{\pqrfac{q^4}{k}{q^4}} (a^2q)^k
}
{\displaystyle
\sum_{k=0}^\infty \frac{\pqrfac{(b/aq)^2}{k}{q^4} }{\pqrfac{q^4}{k}{q^4}} (a^2q^3)^k
}
%\cr
%&=
%\frac{\displaystyle
%\sum_{k=0}^\infty \frac{\pqrfac{bq/a, -bq/a}{k}{q^2} }{\pqrfac{q^2, -q^2}{k}{q^2}} (a^2q)^k
%}
%{\displaystyle
%\sum_{k=0}^\infty \frac{\pqrfac{b/aq, -b/aq}{k}{q^2} }{\pqrfac{q^2, -q^2}{k}{q^2}} (a^2q^3)^k
%}
\end{align*}
which can be written as the right hand side of \eqref{entry12-pf-step1} using some elementary identities involving $q$-rising factorials. 

Secondly,  we show
 \begin{align}\label{entry12-pf-step2}
 \frac{\displaystyle\rPhis{2}{1}{b/aq, -{b}/{aq} }{-q^2}{a^2q^3}{q^2}}
 { \displaystyle \rPhis{2}{1}{ bq/a, -{bq}/{a}}{-q^2}{a^2q}{q^2}}
 = %\frac{1}{N/D}=
%\frac{\displaystyle
%\sum_{k=0}^\infty \frac{\pqrfac{bq/a, -bq/a}{k}{q^2} }{\pqrfac{q^2, -q^2}{k}{q^2}} (a^2q)^k
%}
%{\displaystyle
%\sum_{k=0}^\infty \frac{\pqrfac{b/aq, -b/aq}{k}{q^2} }{\pqrfac{q^2, -q^2}{k}{q^2}} (a^2q^3)^k
%} 
{1-ab} + \frac{(a-bq)(b-aq)}{(1+q^2) \cfrac{D(1)}{D(2)}}. 
\end{align}
To prove this we will use \eqref{div-1step}. Let $U$ (respectively, $V$) denote the $_2\phi_1$ sum
in the numerator (respectively, denominator) above.
Observe that
\begin{align*}
\frac{U}{V} & =\frac{
\displaystyle
\sum_{k=0}^\infty \frac{\pqrfac{b/aq, -b/aq}{k}{q^2} }{\pqrfac{q^2, -q^2}{k}{q^2}} (a^2q^3)^k
}
{\displaystyle
\sum_{k=0}^\infty \frac{\pqrfac{bq/a, -bq/a}{k}{q^2} }{\pqrfac{q^2, -q^2}{k}{q^2}} (a^2q)^k
}
\cr
&= 1 + \frac{1}{V}
\Bigg(
\displaystyle
\sum_{k=0}^\infty \frac{\pqrfac{b/aq, -b/aq}{k}{q^2} }{\pqrfac{q^2, -q^2}{k}{q^2}} (a^2q^3)^k
-
\displaystyle
\sum_{k=0}^\infty \frac{\pqrfac{bq/a, -bq/a}{k}{q^2} }{\pqrfac{q^2, -q^2}{k}{q^2}} (a^2q)^k
\Bigg) \cr
%&=1 + \frac{1}{D_1}
%\Bigg(
%\displaystyle
%\sum_{k=0}^\infty \frac{\pqrfac{b/aq, -b/aq}{k}{q^2} }{\pqrfac{q^2, -q^2}{k}{q^2}} (a^2q)^k
%\left(
%q^{2k}-\frac{\left(1-bq^{2k}/aq \right)\left(1+bq^{2k}/aq \right)}
%{\left(1-b/aq \right)\left(1+b/aq \right)}
%\right)
%\Bigg) \cr
&=1 + \frac{1}{V}
\Bigg(
\displaystyle
\sum_{k=0}^\infty \frac{\pqrfac{b/aq, -b/aq}{k}{q^2} }{\pqrfac{q^2, -q^2}{k}{q^2}} (a^2q)^k
\left((-1)\frac{\left(1-q^{2k} \right)\left(1+b^2q^{2k}/a^2q^2 \right)}
{\left(1-b/aq \right)\left(1+b/aq \right)}
\right)
\Bigg).
\end{align*}
Note that the first term of the sum corresponding to $k=0$ is $0$. We cancel $\left(1-q^{2k} \right)$, and then shift the index to obtain 
\begin{align*}
\frac{U}{V}&=1 + \frac{(-1)}{V}
\Bigg(
\displaystyle
\sum_{k=1}^\infty \frac{\pqrfac{bq/a, -bq/a}{k-1}{q^2} }{\pqrfac{q^2}{k-1}{q^2}
\pqrfac{ -q^2}{k}{q^2}} (a^2q)^k
\left(1+b^2q^{2k}/a^2q^2 \right)
\Bigg)\cr
&=1 + \frac{(-1)}{V}
\Bigg(
\displaystyle
\sum_{k=0}^\infty \frac{\pqrfac{bq/a, -bq/a}{k}{q^2} }{\pqrfac{q^2, -q^2}{k}{q^2}} (a^2q)^k
\left[\frac {a^2q \left(1+b^2q^{2k}/a^2\right)}{1+q^{2k+2}} \right]
\Bigg).
\end{align*}
Next, we use
\begin{align}\label{star}\tag{$*$}
\frac {a^2q \left(1+b^2q^{2k}/a^2\right)}{1+q^{2k+2}} &= 
%
%\frac{ \left(a^2q+b^2q^{2k+1}\right)}{1+q^{2k+2}} \cr
%&= \frac{ \left(a^2q-abq^{2k+2}+b^2q^{2k+1}- ab +abq^{2k+2}+ab\right)}{1+q^{2k+2}} \cr 
%&= \frac{ \left(a^2q\left(1-bq^{2k+1}/a\right)-ab\left(1-bq^{2k+1}/a\right) +ab\left(1+q^{2k+2}\right)
%\right)}{1+q^{2k+2}} \cr
%&= \frac{ \left(a^2q-ab\right)\left(1-bq^{2k+1}/a\right)+ab\left(1+q^{2k+2}\right)}{1+q^{2k+2}}\cr
%&=
ab + \frac{ a\left(aq-b\right)\left(1-bq^{2k+1}/a\right)}{1+q^{2k+2}},
\end{align}
to obtain
\begin{align*}% \label{entry12-frac3}
\frac{U}{V}&=1+\frac{(-ab)}{V}\Bigg(
\displaystyle
\sum_{k=0}^\infty \frac{\pqrfac{bq/a, -bq/a}{k}{q^2} }{\pqrfac{q^2, -q^2}{k}{q^2}} (a^2q)^k
\Bigg) \cr
& - \frac{a(aq-b)(1-bq/a)}{\left(1+q^2\right) V}
\Bigg(
\displaystyle
\sum_{k=0}^\infty \frac{\pqrfac{bq^3/a, -bq/a}{k}{q^2} }{\pqrfac{q^2, -q^4}{k}{q^2}} (a^2q)^k
\Bigg)\cr
&= 1-ab + \frac{(a-bq)(b-aq)}{ 
\cfrac{\left(1+q^2\right) \Bigg(
\displaystyle
\sum_{k=0}^\infty \frac{\pqrfac{bq/a, -bq/a}{k}{q^2} }{\pqrfac{q^2, -q^2}{k}{q^2}} (a^2q)^k
\Bigg)}
{\Bigg(
\displaystyle
\sum_{k=0}^\infty \frac{\pqrfac{bq^3/a, -bq/a}{k}{q^2} }{\pqrfac{q^2, -q^4}{k}{q^2}} (a^2q)^k
\Bigg)}}
.
\end{align*}
In this manner, we obtain \eqref{entry12-pf-step2}.
%Taking reciprocals, we obtain \eqref{entry12-pf-step2}.
%Observe that in the first step, the sum in the term involving $ab$ is the same as $D$, and so cancels. 

%For presentation
%For $s=0, 1, 2, \dots$,
%\begin{align*}\label{entry12-recursion}
%(1+q^{2s})\frac{D(s)}{D(s+1)}=
%(1-ab)(1+q^{2s})+\frac{(a-bq^{2s+1})(b-aq^{2s+1})}{(1+q^{2s+2}){\cfrac{D(s+1)}{D(s+2)}}}
%\end{align*}
%

Note that \eqref{entry12-pf-step1} and \eqref{entry12-pf-step2} give us
\eqref{entry12-prop} for $s=0$. The third fact we require is the recursion, for $s=0, 1, 2, \dots$,
\begin{align}\label{entry12-recursion}
(1+q^{2s})\frac{D(s)}{D(s+1)}=
(1-ab)(1+q^{2s})+\frac{(a-bq^{2s+1})(b-aq^{2s+1})}{(1+q^{2s+2}){\cfrac{D(s+1)}{D(s+2)}}}.
\end{align}
To prove \eqref{entry12-recursion}, we again apply \eqref{div-1step}. 
The steps are similar to those above. We have 
\begin{align*}
\frac{D(s)}{D(s+1)} & =\frac{
\displaystyle
\sum_{k=0}^\infty \frac{\pqrfac{bq^{2s-1}/a, -bq/a}{k}{q^2} }{\pqrfac{q^2, -q^{2s}}{k}{q^2}} (a^2q)^k
}
{\displaystyle
\sum_{k=0}^\infty \frac{\pqrfac{bq^{2s+1}/a, -bq/a}{k}{q^2} }{\pqrfac{q^2, -q^{2s+2}}{k}{q^2}} (a^2q)^k
}
\cr
%&=1 + \frac{1}{D_2}
%\Bigg(
%\displaystyle
%\sum_{k=0}^\infty \frac{\pqrfac{bq/a, -bq/a}{k}{q^2} }{\pqrfac{q^2, -q^2}{k}{q^2}} (a^2q)^k
%\left(
%1-\frac{\left(1-bq^{2k+1}/a \right)\left(1+q^{2} \right)}
%{\left(1-bq/a \right)\left(1+q^{2k+2} \right)}
%\right)
%\Bigg) \cr
&=1 + \frac{1}{D(s+1)}
\displaystyle
\sum_{k=0}^\infty \Bigg(
\frac{\pqrfac{bq^{2s-1}/a, -bq/a}{k}{q^2} }{\pqrfac{q^2, -q^{2s}}{k}{q^2}} (a^2q)^k 
\cr
&\hspace{1in} \times
\left((-1)\frac{\left(1-q^{2k} \right)\left(q^{2s}+bq^{2s-1}/a \right)}
{\left(1-bq^{2s-1}/a \right)\left(1+q^{2k+2s} \right)}
\right)\Bigg).
\end{align*}
Once again, we realize the first term is $0$, and after canceling $\left(1-q^{2k} \right)$, we shift the index so that it again begins from $0$. 
\begin{align*}
\frac{D(s)}{D(s+1)} 
&=
%1 + \frac{(-1)}{D(s+1)}
%\Bigg(
%\displaystyle
%\sum_{k=1}^\infty \frac{\pqrfac{bq^3/a}{k-1}{q^2} \pqrfac{-bq/a}{k}{q^2} }{\pqrfac{q^2}{k-1}{q^2}
%\pqrfac{ -q^2}{k}{q^2}} (a^2q)^k
%\left(\frac{q^2+bq/a}{1+q^{2k+2}} \right)
%\Bigg)\cr
%&=1 + \frac{(-1)}{D_2}
%\Bigg(
%\displaystyle
%\sum_{k=0}^\infty \frac{\pqrfac{bq^3/a}{k}{q^2} \pqrfac{-bq/a}{k+1}{q^2} }{\pqrfac{q^2}{k}{q^2}
%\pqrfac{ -q^2}{k+1}{q^2}} (a^2q)^k
%\left[\frac{a^2q\left(q^2+bq/a\right)}{1+q^{2k+4}} \right]
%\Bigg)\cr
%&=
1 + \frac{(-1)}{D(s+1)}
\displaystyle
\sum_{k=0}^\infty \frac{\pqrfac{bq^{2s+1}/a, -bq/a}{k}{q^2} }{\pqrfac{q^2, -q^{2s+2}}{k}{q^2}
} (a^2q)^k \cr
&\hspace{1in}\times
\Bigg(
\left[\frac{\left( 1+bq^{2k+1}/a \right)\left(a^2q^{2s+1}+abq^{2s}\right)}
{\left(1+q^{2s}\right) \left(1+q^{2k+2s+2}\right)} \right]
\Bigg).
\end{align*}
%This time, we have an extra factor
%$$\left[\frac{\left( 1+bq^{2k+1}/a \right)\left(a^2q^3+abq^2\right)}
%{\left(1+q^{2}\right) \left(1+q^{2k+4}\right)} \right].
%$$
%Once again, notice that without this factor, what remains of the sum is  $D_2$.
%
%Given our experience, we know how to handle it. We obtain the factor 
%$$\left( 1-bq^{2k+3}/a\right).$$ We have
We use
\begin{align}\label{2star}
%& 
\frac{\left( 1+bq^{2k+1}/a \right)\left(a^2q^{2s+1}+abq^{2s}\right)}
{\left(1+q^{2s}\right) \left(1+q^{2k+2s+2}\right)} 
%\cr
%&=
%\frac{ a^2q^3+abq^{2k+4}+abq^2+b^2q^{2k+3}}
%{\left(1+q^{2}\right) \left(1+q^{2k+4}\right)} \cr
%&= \frac{ \left(a^2q^3-abq^{2k+6}\right)+abq^{2k+4}+abq^2+\left(b^2q^{2k+3}-ab\right)
%+ab+abq^{2k+6}}
%{\left(1+q^{2}\right) \left(1+q^{2k+4}\right)} \cr
%&= \frac{ \left(a^2q^3-ab\right)\left(1-bq^{2k+3}/a\right) +ab\left( 1+q^2+q^{2k+4}+
%q^{2k+6}\right)}
%{\left(1+q^{2}\right) \left(1+q^{2k+4}\right)} \cr
%&= \frac{ a\left(aq^3-b\right)\left(1-bq^{2k+3}/a\right) +ab\left( 1+q^2\right)\left(1+q^{2k+4}\right)}
%{\left(1+q^{2}\right) \left(1+q^{2k+4}\right)} \cr
&= ab +\frac{ a\left(aq^{2s+1}-b\right)\left(1-bq^{2k+2s+1}/a\right)}
{\left(1+q^{2s}\right) \left(1+q^{2k+2s+2}\right)},
\tag{$**$}
\end{align}
to obtain, after multiplying by $(1+q^2)$,
\begin{align*}
\left(1+q^2\right)\frac{D(s)}{D(s+1)}&=(1-ab)\left(1+q^{2s}\right)  
\cr
&\hspace{10pt}
- \frac{a(aq^{2s+1}-b)(1-bq^{2s+1}/a)}{\left(1+q^{2s+2}\right) D(s+1)} 
\cr
&\hspace{1in}\times
\Bigg(
\displaystyle
\sum_{k=0}^\infty \frac{\pqrfac{bq^{2s+3}/a, -bq/a}{k}{q^2} }{\pqrfac{q^{2}, -q^{2s+4}}{k}{q^2}} (a^2q)^k
\Bigg)
.
%\cr
%&= (1-ab)\left(1+q^2\right) + \frac{(b-aq^3)(a-bq^3)}{ 
%\cfrac{\left(1+q^4\right) \Bigg(
%\displaystyle
%\sum_{k=0}^\infty \frac{\pqrfac{bq^3/a, -bq/a}{k}{q^2} }{\pqrfac{q^2, -q^4}{k}{q^2}} (a^2q)^k
%\Bigg)}
%{\Bigg(
%\displaystyle
%\sum_{k=0}^\infty \frac{\pqrfac{bq^5/a, -bq/a}{k}{q^2} }{\pqrfac{q^2, -q^6}{k}{q^2}} (a^2q)^k
%\Bigg)}}
%.
\end{align*}
This reduces to the right hand side of \eqref{entry12-recursion}.

Using \eqref{entry12-pf-step1} and \eqref{entry12-pf-step2}, we obtain \eqref{entry12-prop} for $s=0$. We iterate using \eqref{entry12-recursion} to complete the derivation.
\end{proof}
\begin{Remark} On the surface, it appears that the algebraic simplifications \eqref{star} and 
\eqref{2star} are unmotivated, and the difficult parts of the proof. However, the key difficulty in 
applying Euler's method is to write the continued fraction as a ratio of two series. There are 
multiple ways to do so, and not all are amenable to Euler's approach. 
Once this is done in 
\eqref{entry12-pf-step1}, the rest follows by applying Euler's method. 
Notice that if we ignore the quantity in square brackets in the sum in the expression before 
\eqref{2star}, the sum is $D(s+1)$. A similar remark applies to the sum above \eqref{star}. This 
observation motivates the algebraic manipulations required in the proof. Even so, the algebraic 
manipulations required for obtaining other continued fractions due to Ramanujan by this method 
(see \cite{GB2014}) are, comparatively, relatively straightforward.  
\end{Remark}

\section{A proof of Entry 12 using Darboux's Method}\label{sec:proof2}

The objective of this section to provide another proof of Ramanujan's Entry 12, by what may be called the standard orthogonal polynomial method.  Let $C$ denote Ramanujan's continued fraction.
$$C=\frac{1}{1-ab}\fplus\frac{(a-bq)(b-aq)}{(1-ab)(1+q^2)}\fplus 
%&
\frac{(a-bq^3)(b-aq^3)}{(1-ab)(1+q^4)}\fplus
%\frac{(a-bq^5)(b-aq^5)}{(1-ab)(1+q^6)}\fplus
\fdots. 
$$
On the other hand, the continued fraction $K=H(1)/(1-ab)$ is
$$K=\frac{1}{2(1-ab)}\fplus\frac{(a-bq)(b-aq)}{(1-ab)(1+q^2)}\fplus 
%&
\frac{(a-bq^3)(b-aq^3)}{(1-ab)(1+q^4)}\fplus
%\frac{(a-bq^5)(b-aq^5)}{(1-ab)(1+q^6)}\fplus
\fdots. 
$$
Evidently 
\begin{equation} \label{KC-relation}
\frac{1}{K}-(1-ab) = \frac{1}{C}.
\end{equation}
Our approach is to first find an expression for $H(1)$ and then use \eqref{KC-relation} to find the value of  $C$.

%For presentation
$$C=\frac{1}{1-ab}\fplus\frac{(a-bq)(b-aq)}{(1-ab)(1+q^2)}\fplus 
%&
\frac{(a-bq^3)(b-aq^3)}{(1-ab)(1+q^4)}\fplus
%\frac{(a-bq^5)(b-aq^5)}{(1-ab)(1+q^6)}\fplus
\fdots
$$

$$K=\frac{H(1)}{1-ab}=\frac{1}{2(1-ab)}\fplus\frac{(a-bq)(b-aq)}{(1-ab)(1+q^2)}\fplus 
%&
\frac{(a-bq^3)(b-aq^3)}{(1-ab)(1+q^4)}\fplus
%\frac{(a-bq^5)(b-aq^5)}{(1-ab)(1+q^6)}\fplus
\fdots
$$
%Evidently 
\begin{equation*} \label{K-C-relation}
\frac{1}{K}-(1-ab) = \frac{1}{C}
\end{equation*}

\begin{Theorem} Let $|q|<1$, $|ab|<1$ and $|a^2q|<1$ and let $H(x)$ be the continued fraction \eqref{jfrac-e12}. Then
\begin{equation}\label{H1}
H(1) = \frac{(1-ab)}{2} \cdot
\frac{\displaystyle\rPhis{2}{1}{-{bq}/{a}, bq/a}{-q^2}{a^2q}{q^2}}
{ \displaystyle \rPhis{2}{1}{-{bq}/{a}, b/aq}{-1}{a^2q}{q^2}}.
%&= \frac{(1-ab) D(1)}{2D(0)},
\end{equation}
\end{Theorem}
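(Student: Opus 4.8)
The plan is to realize $H(1)$ as the limit of the ratio of the numerator and denominator convergents and to extract that limit from the generating functions of those polynomials by Darboux's method. Specializing the three-term recurrence \eqref{entry12-3term} to $x=1$ gives
\begin{equation*}
y_{k+1}(1) = (1-ab)(1+q^{2k})\,y_k(1) + (a-bq^{2k-1})(b-aq^{2k-1})\,y_{k-1}(1),
\end{equation*}
satisfied by both $N_k(1)$ and $D_k(1)$, and $H(1)=\lim_{k\to\infty}N_k(1)/D_k(1)$. Since $|q|<1$, the coefficients tend to $1-ab$ and $ab$, so the limiting constant-coefficient recurrence has characteristic equation $(\lambda-1)(\lambda+ab)=0$, with roots $1$ and $-ab$. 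As $|ab|<1$, the root $1$ strictly dominates, so the generic solution grows like $1^k$ and the exceptional one like $(-ab)^k\to0$; both $N_k(1)$ and $D_k(1)$ should therefore approach finite limits $c_N$ and $c_D$, with $H(1)=c_N/c_D$. The task is to compute these two limits rigorously.

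To this end I would introduce $\mathcal D(t)=\sum_{k\ge0}D_k(1)\,t^k$ and $\mathcal N(t)=\sum_{k\ge0}N_k(1)\,t^k$. Multiplying the recurrence by $t^{k+1}$, summing over $k\ge1$, and collecting the terms carrying $q^{2k}$ and $q^{4k}$ produces a $q$-difference (functional) equation relating each generating function at the arguments $t$, $q^2t$, and $q^4t$. The coefficient of $\mathcal D(t)$ is $1-(1-ab)t-abt^2=(1-t)(1+abt)$, which displays the dominant singularity explicitly: a simple pole at $t=1$, the competing zero $t=-1/(ab)$ lying outside the closed unit disc because $|ab|<1$. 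The initial data $D_0(1)=1,\ D_1(1)=2(1-ab)$ and $N_0(1)=0,\ N_1(1)=1-ab$ fix the two (different) right-hand sides.

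Darboux's method then converts this singularity structure into coefficient asymptotics: a simple pole at $t=1$ forces $D_k(1)\to c_D=\lim_{t\to1}(1-t)\mathcal D(t)$ and $N_k(1)\to c_N=\lim_{t\to1}(1-t)\mathcal N(t)$, with subdominant contribution decaying like $(-ab)^k$. I would solve the functional equation in closed form as a basic hypergeometric series and evaluate the residues at $t=1$; the hypothesis $|a^2q|<1$ is exactly what makes the resulting ${}_2\phi_1$ series in $a^2q$ to the base $q^2$ converge. Matching the outcome to the series $D(s)$ of \eqref{def-ds} — the two ${}_2\phi_1$'s in \eqref{H1} are precisely $D(1)$ and $D(0)$ — the ratio collapses to $c_N/c_D=\tfrac{1-ab}{2}\,D(1)/D(0)$, the factor $\tfrac12$ tracing back to the anomalous value $D_1(1)=2(1-ab)$, i.e.\ the ``missing $2$.''

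The main obstacle is the closed-form solution of the $q$-difference equation together with the residue evaluation at $t=1$: the equation couples three dilations of the generating function, so the residue cannot be read off directly and must be resolved into the two ${}_2\phi_1$ series. I expect that it is precisely here that the algebraic identities underlying Euler's method — the analogues of \eqref{star} and \eqref{2star} used to rewrite the relevant $q$-series — are needed to recognize the emerging sums as $D(1)$ and $D(0)$. This is the sense in which the factor-of-$2$ asymmetry of Ramanujan's first denominator forces the orthogonal-polynomial proof to borrow the computations of Section~\ref{sec:euler}.
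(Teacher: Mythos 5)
Your overall strategy is the paper's: specialize to $x=1$, form generating functions of the numerator and denominator polynomials, locate the dominant simple pole at $t=1$ (the competing singularity $t=-1/ab$ lying farther out since $|ab|<1$), extract the limits of $N_k(1)$ and $D_k(1)$ by Darboux's method, and take their ratio; your identification of the two $_2\phi_1$'s in \eqref{H1} with $D(1)$ and $D(0)$ of \eqref{def-ds} is also correct. But the step you yourself flag as the ``main obstacle'' is a genuine gap, and it is exactly where the paper's one essential idea lives. Because the coefficient $(a-bq^{2k-1})(b-aq^{2k-1})=ab-(a^2+b^2)q^{2k-1}+abq^{4k-2}$ is \emph{quadratic} in $q^{2k}$, your unscaled generating functions $\mathcal{N}(t)$, $\mathcal{D}(t)$ satisfy a functional equation coupling the three dilations $t$, $q^2t$, $q^4t$, and such an equation does not iterate to a closed form; hoping that \eqref{star}/\eqref{2star}-type rearrangements will resolve it does not work, since those identities manipulate series coefficients, not the dilation structure of the functional equation. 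The paper removes the obstruction \emph{before} forming generating functions: it rescales $\widehat{N}_k=N_k/\pqrfac{bq/a}{k}{q^2}$ and $\widehat{D}_k=D_k/\pqrfac{bq/a}{k}{q^2}$, which transfers one linear factor $(1-bq^{2k+1}/a)$ onto the $y_{k+1}$ term and leaves only $ab(1-aq^{2k-1}/b)$ on the $y_{k-1}$ term, so every coefficient is linear in $q^{2k}$; the generating function then satisfies a first-order equation $Q(t)=A(t)+B(t)Q(q^2t)$ which iterates to an explicit $q$-hypergeometric series with poles at $(1-\delta_1 t)(1-\delta_2 t)=1-(1-ab)xt-abt^2$, i.e.\ at $t=1$ and $t=-1/ab$ when $x=1$. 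Since $\pqrfac{bq/a}{k}{q^2}$ tends to a finite nonzero constant, the rescaling cancels in the ratio $H(1)=\widehat{N}_k(1)/\widehat{D}_k(1)$.

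Two smaller corrections. First, the paper does not use the Euler-method identities in this proof at all: the Darboux limits come out as $\frac{(1-ab)}{1+ab}\,{}_2\phi_1(-aq/b,a^2q;-abq^2;q^2,bq/a)$ and $\frac{(1-b/aq)}{1+ab}\,{}_2\phi_1(-aq/b,a^2q;-abq^2;q^2,b/aq)$, and these are converted to the form \eqref{H1} by Heine's transformation \cite[Equation~(III.1)]{GR90}; the manipulations \eqref{star} and \eqref{2star} enter only afterwards, in the Corollary deducing Entry 12 from \eqref{H1} via \eqref{KC-relation} and \eqref{entry12-recursion}. Second, your Poincar\'e--Perron argument gives only the ratio asymptotics $y_{k+1}/y_k\to 1$ from the limiting recurrence $(\lambda-1)(\lambda+ab)=0$; it does not by itself show that $D_k(1)$ and $N_k(1)$ converge to finite limits (that is what the generating-function/Darboux analysis proves), so as written that paragraph is a heuristic, not a step of the proof.
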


%For Presentation
%\subsection*{Theorem} Let $|q|<1$, $|ab|<1$ and $|a^2q|<1$ .
%Then
%\begin{equation*}\label{H1}
%H(1) = \frac{(1-ab)}{2} \cdot
%\frac{\displaystyle\rPhis{2}{1}{-{bq}/{a}, bq/a}{-q^2}{a^2q}{q^2}}
%{ \displaystyle \rPhis{2}{1}{-{bq}/{a}, b/aq}{-1}{a^2q}{q^2}}.
%%&= \frac{(1-ab) D(1)}{2D(0)},
%\end{equation*}

\begin{proof}
Consider once again the recurrence relation \eqref{entry12-3term}, and the numerator and denominator polynomials, respectively, $N_k(x)$ and $D_k(x)$. Let 
\begin{align*}
\widehat{N}_k(x) &:= \frac{N_k(x)}{\pqrfac{bq/a}{k}{q^2}} ,
\\
\intertext{and}
\widehat{D}_k(x) &:= \frac{D_k(x)}{\pqrfac{bq/a}{k}{q^2}} 
.
\end{align*}
We see that these polynomials satisfy the recurrence equation
\begin{multline*}%\label{entry12-3term-2}
(1-bq^{2k+1}/a) y_{k+1}(x) = ((1-ab)x + (1-ab)q^{2k}) y_k(x) +
\cr ab (1-aq^{2k-1}/b) y_{k-1}(x), 
\end{multline*}
 for  $k > 0$;
with initial values
$$ \widehat{N}_0(x)=0, \widehat{N}_1(x)=\frac{1-ab}{1-bq/a};
\widehat{D}_0(x)=1, \widehat{D}_1(x)=\frac{(1-ab)(x+1)}{1-bq/a}.$$ 

Let $\widehat{N}(t)$ and $\widehat{D}(t)$ denote the respective generating functions. We find that 
\begin{align*}
\widehat{N}(t) & = \frac{(1-ab)t}{(1-\delta_1 t)(1-\delta_2 t)}
\sum_{m=0}^\infty \frac{\pqrfac{-aqt/b, a^2qt}{m}{q^2}}
{\pqrfac{\delta_1 q^2 t, \delta_2 q^2 t}{m}{q^2}} \Big( \frac{bq}{a} \Big)^m, \\
\intertext{and}
\widehat{D}(t) & = \frac{(1-b/aq)}{(1-\delta_1 t)(1-\delta_2 t)}
\sum_{m=0}^\infty \frac{\pqrfac{-aqt/b, a^2qt}{m}{q^2}}
{\pqrfac{\delta_1 q^2 t, \delta_2 q^2 t}{m}{q^2}} \Big( \frac{b}{aq} \Big)^m, \\
\end{align*}
where $\delta_1$ and $\delta_2$ are such that
$$ 1-(1-ab)xt -abt^2 = (1-\delta_1 t)(1-\delta_2 t).$$
We are interested in the value of the continued fraction when $x=1$. But when $x=1$, we have
$$\delta_1 = 1, \delta_2=-ab.$$
If we assume $|ab|<1$, then $t=1$ is the singularity closest to the origin. 

To find the asymptotic behavior of $\widehat{N}_k(1)$, we apply Darboux's method \cite[Th. 1.2.4]{MI2009} with the comparison function
$$N^*(t)=\sum_{k=0}^\infty N^*_k t^k = 
\frac{(1-ab)}{(1-t)(1+ab)}
\sum_{m=0}^\infty \frac{\pqrfac{-aq/b, a^2q}{m}{q^2}}
{\pqrfac{q^2 , -ab q^2 }{m}{q^2}} \Big( \frac{bq}{a} \Big)^m.
$$
Note that $\widehat{N}(t) - N^*(t)$ has a removable singularity at $t=1$, and is thus continuous in the disk $|t|\le r$, for some $r>1$. By Darboux's method, we obtain
$$\widehat{N}(1)= N^*_k+o(r^{-k}).$$
Similarly, we can find an asymptotic formula for $\widehat{D}(1)$. 

In this manner, when $|ab|<1$,  we obtain:
\begin{align*}
\widehat{N}_k(1) \sim \frac{(1-ab)}{1+ab}\rPhis{2}{1}{-{aq}/{b}, a^2q}{-abq^2}{\frac{bq}{a}}{q^2},\\
\intertext{and}
\widehat{D}_k(1) \sim \frac{(1-b/aq)}{1+ab}\rPhis{2}{1}{-{aq}/{b}, a^2q}{-abq^2}{\frac{b}{aq}}{q^2}.
\end{align*}
The continued fraction $H(1)$ is the ratio
$$H(1)=\frac{N_k(1)}{D_k(1)} = \frac{\widehat{N}_k(1)}{\widehat{D}_k(1)},
$$
which is the ratio of the above basic hypergeometric series. 
Finally, we transform both the $_2\phi_1$ series using Heine's transformation formula 
\cite[Equation~(III.1)]{GR90} to complete the proof.
\end{proof}
%to find that
%\begin{align*}
%H(1) &= \frac{(1-ab)}{2} \cdot
%\frac{\displaystyle\rPhis{2}{1}{-{bq}/{a}, bq/a}{-q^2}{a^2q}{q^2}}
%{ \displaystyle \rPhis{2}{1}{-{bq}/{a}, b/aq}{-1}{a^2q}{q^2}}\\
%&= \frac{(1-ab) D(1)}{2D(0)},
%\end{align*}
%in the notation of \S\ref{sec:euler}.

%For Presentation

%\subsection*{Corollary}(Ramanujan's Entry II.16.12)\label{cor:entry12} Let $|q|<1$ and $|ab|<1$. Then, we have
%%\begin{multline}
%$$
%\frac{\pqrfac{a^2q^3, b^2q^3}{\infty}{q^4}}{\pqrfac{a^2q, b^2q}{\infty}{q^4}}
%= %\cr
%\frac{1}{1-ab}\fplus\frac{(a-bq)(b-aq)}{(1-ab)(1+q^2)}\fplus 
%%&
%\frac{(a-bq^3)(b-aq^3)}{(1-ab)(1+q^4)}\fplus
%%\frac{(a-bq^5)(b-aq^5)}{(1-ab)(1+q^6)}\fplus
%\fdots
%%\cr 
%$$

\begin{Corollary}[Ramanujan's Entry II.16.12]\label{cor:entry12} Let $|q|<1$ and $|ab|<1$. Then, we have
%\begin{multline}
$$
\frac{\pqrfac{a^2q^3, b^2q^3}{\infty}{q^4}}{\pqrfac{a^2q, b^2q}{\infty}{q^4}}
= %\cr
\frac{1}{1-ab}\fplus\frac{(a-bq)(b-aq)}{(1-ab)(1+q^2)}\fplus 
%&
\frac{(a-bq^3)(b-aq^3)}{(1-ab)(1+q^4)}\fplus
%\frac{(a-bq^5)(b-aq^5)}{(1-ab)(1+q^6)}\fplus
\fdots. 
%\cr 
$$
%\label{entry12-2}
%\end{multline}
\end{Corollary}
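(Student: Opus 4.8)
The plan is to read off $C$ from the theorem just proved together with \eqref{KC-relation}. Write
$$N:=\rPhis{2}{1}{bq/a, -bq/a}{-q^2}{a^2q}{q^2},\qquad M:=\rPhis{2}{1}{b/aq, -bq/a}{-1}{a^2q}{q^2}$$
for the numerator and denominator series of $H(1)$, so that the theorem gives $H(1)=\tfrac12(1-ab)N/M$ and hence $K=H(1)/(1-ab)=N/(2M)$. Substituting $1/K=2M/N$ into \eqref{KC-relation} gives
$$\frac1C=\frac{2M}{N}-(1-ab)=\frac{2M-(1-ab)N}{N}.$$
So the corollary will follow at once from the two facts $2M-(1-ab)N=Q$ and $N/Q=\,$(the infinite product), where $Q:=\rPhis{2}{1}{b/aq, -b/aq}{-q^2}{a^2q^3}{q^2}$; the second of these is precisely \eqref{entry12-pf-step1}.

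For the first fact I would bridge to Section~\ref{sec:euler}. Comparing with \eqref{def-ds} one sees that $M=D(0)$ and $N=D(1)$: indeed $bq^{2s-1}/a=b/aq$ and $-q^{2s}=-1$ at $s=0$, while $s=1$ produces exactly the parameters of $N$. Now the recursion \eqref{entry12-recursion} at $s=0$ reads
$$2\,\frac{D(0)}{D(1)}=2(1-ab)+\frac{(a-bq)(b-aq)}{(1+q^2)\,D(1)/D(2)},$$
while \eqref{entry12-pf-step2}, whose left-hand side is exactly $Q/N$, reads
$$\frac{Q}{N}=(1-ab)+\frac{(a-bq)(b-aq)}{(1+q^2)\,D(1)/D(2)}.$$
Subtracting cancels the common tail and leaves $2M/N-Q/N=1-ab$, that is $2M-(1-ab)N=Q$. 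Feeding this back gives $1/C=Q/N$, so $C=N/Q$, which by \eqref{entry12-pf-step1} is the desired quotient of products, completing Entry 12.

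The main obstacle is the identity $2M-(1-ab)N=Q$. Attacked head-on as a $q$-series identity it is awkward, because $M$ has lower parameter $-1$ (not $-q^2$) and so does not collapse under the $q$-binomial theorem, and because the identity is genuinely not termwise: already the $k=0$ contributions disagree by $ab$. The clean route is instead to recognise the Darboux series $M,N$ as the Euler-method quantities $D(0),D(1)$ and to let the two Section~\ref{sec:euler} relations do the work, their continued-fraction tails cancelling on subtraction; this is the sense in which, even in the orthogonal-polynomial proof, ``one still needs the algebraic manipulations from Euler's method.'' One small point remains: the theorem was established under the extra assumption $|a^2q|<1$, whereas the corollary asserts Entry 12 for all $|ab|<1$, so the final step is to remove $|a^2q|<1$ by analytic continuation in $a$, both sides being analytic where the product converges.
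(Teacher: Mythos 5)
Your proposal is correct and follows essentially the same route as the paper's own proof: it identifies the two $_2\phi_1$ series in \eqref{H1} as $D(1)$ and $D(0)$, feeds $1/K=2D(0)/D(1)$ into \eqref{KC-relation}, and then invokes the recursion \eqref{entry12-recursion} at $s=0$ together with \eqref{entry12-pf-step2} and \eqref{entry12-pf-step1} -- your subtraction yielding $2M-(1-ab)N=Q$ is just a repackaging of the paper's chain of equalities. Your closing observation that the hypothesis $|a^2q|<1$ from the theorem must be removed by analytic continuation is a point the paper passes over in silence, and is a worthwhile addition.
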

\begin{proof} We recognize that the $_2\phi_1$ series in \eqref{H1} are $D(0)$ and $D(1)$, where $D(s)$ is defined in \eqref{def-ds}. So we have
$$K=H(1)/(1-ab) = \frac{D(1)}{2 D(0)}.$$
Now from \eqref{K-C-relation} and using \eqref{entry12-recursion} (with $s=0$), we have:
\begin{align*}
\frac{1}{C}=\frac{1}{K} -(1-ab) &= 2\frac{D(0)}{D(1)} - (1-ab) \\
& = (1-ab) +
\frac{(a-bq)(b-aq)}{(1+q^{2}){\cfrac{D(1)}{D(2)}}} \cr
&= 
\frac{\displaystyle\rPhis{2}{1}{{b}/{aq}, -b/aq}{-q^2}{a^2q^3}{q^2}}
{ \displaystyle \rPhis{2}{1}{{bq}/{a}, -bq/a}{-q^2}{a^2q}{q^2}},
\end{align*}
where we used \eqref{entry12-pf-step2} to obtain the last equality. Finally, using \eqref{entry12-pf-step1}, we complete the proof.
\end{proof}
It is rather remarkable that we require the  same algebraic manipulations in both our proofs of Entry 12.

\begin{Remarks} \ 
\begin{enumerate}
\item   If $|ab|>1$, then we see that $t=-1/ab$ is the singularity closest to the origin. Again we can find the value of the continued fraction by using Darboux's theorem. Alternatively, perhaps it is easier to see that the continued fraction is equivalent to 
$$\frac{-1/ab}{1-(ab)^{-1}}\fplus\frac{(a^{-1}-b^{-1}q)(b^{-1}-a^{-1}q)}{(1-(ab)^{-1})(1+q^2)}\fplus 
%&
\frac{(a^{-1}-b^{-1}q^3)(b^{-1}-a^{-1}q^3)}{(1-(ab)^{-1})(1+q^4)}\fplus
%\frac{(a-bq^5)(b-aq^5)}{(1-ab)(1+q^6)}\fplus
\fdots .
$$
The value of this (for $|q|<1$) is obtained from  Corollary~\ref{cor:entry12} by replacing $a$ by $1/a$, $b$ by $1/b$ and dividing the products obtained by $-ab$.
\item When $|q|>1$, we see that the original continued fraction of Ramanujan is equivalent to 
$$\frac{1}{1-ab}\fplus\frac{(a-bp)(b-ap)}{(1-ab)(1+p^2)}\fplus 
%&
\frac{(a-bp^3)(b-ap^3)}{(1-ab)(1+p^4)}\fplus
%\frac{(a-bq^5)(b-aq^5)}{(1-ab)(1+q^6)}\fplus
\fdots,
$$
where now $p=1/q$ and $|p|<1$. We now use Entry 12, with $q$ replaced by $p$, to find the value of the continued fraction.
\end{enumerate}
\end{Remarks}

\section{Generalization of Ramanujan's Entry 12}\label{sec:stieltjes}

We now evaluate the generalization of Ramanujan's Entry 12, the
 $J$-fraction \eqref{jfrac-e12}. 
We will again use generating functions and Darboux's method. We will find it convenient to 
scale the polynomials suitably before proceeding. 

We consider the set of polynomials given by
$$P_k(x)=\frac{D_k(\eta x)}{\eta^k(1-ab)^k},$$ 
where $\eta$ is to be prescribed shortly, and $D_k(x)$ are the denominator polynomials satisfying 
\eqref{entry12-3term} and the initial values mentioned above. We see that $P_k(x)$ satisfies the recurrence relation
\begin{multline*}%\label{entry12-3term}
xP_{k}(x) = P_{k+1}(x) - (q^{2k}/\eta)  P_k(x) 
\cr 
- \frac{ab}{\eta^2(1-ab)^2}(1-bq^{2k-1}/a)(1-aq^{2k-1}/b) P_{k-1}(x).%, \text{ for } k > 0.
\end{multline*}
We choose 
$$\eta^2=\frac{-4ab}{(1-ab)^2}$$
to obtain a recurrence relation of the form
\begin{multline}\label{entry12-3term-nevai}
xP_{k}(x) = P_{k+1}(x) +cq^{2k}  P_k(x) 
\cr 
+ \frac{1}{4}(1-bq^{2k-1}/a)(1-aq^{2k-1}/b) P_{k-1}(x), \text{ for } k > 0,
\end{multline}
where 
$$c=-\frac{(1-ab)}{2\sqrt{-ab}}.$$ We assume that $P_k(x)$ satisfies the initial conditions
$$P_0(x)=1, P_1(x)=x-c.$$
We require $a$ and $b$ to be of different signs, so that $c$ is real. 

This is now in the form of the
standard three-term recurrence relation:
\begin{equation} \label{three-term2}
x y_{k}(x) = y_{k+1}(x) + \alpha_k y_k(x) +\beta_k y_{k-1}(x), \text{ for } k>0,
\end{equation}
where $\alpha_k$ and $\beta_k$ are real. %for $k\geq 0$. 
In our case, we have
$$\alpha_k=cq^{2k} \text{ and } \beta_k= \frac{1}{4}(1-bq^{2k-1}/a)(1-aq^{2k-1}/b).$$ 
Since $a$ and $b$ have opposite signs,  $\beta_k>0$ for $k>0$. So in view of
\cite[Th.\ 2.5.2]{MI2009} we know that the $P_k(x)$ are orthogonal with respect to a positive measure. Further ahead in this section, we will comment further on the measure. 

%Further, in view of Blumenthal's theorem \cite[Th.\ IV-3.5, p.~117]{Chihara1978}, 
%the measure has an absolutely continuous component. 
%

Let $P^*_k(x)$ be the associated polynomials that satisfy 
\eqref{entry12-3term-nevai} with $P^*_0(x)=0$ and $P_1^*(x)=1$. 
The $P^*_k(x)$ are polynomials of 
degree $k-1$, and correspond to the numerator polynomials of the associated continued fraction.  
Let $X(x)$ denote the continued fraction associated with \eqref{entry12-3term-nevai}, that is,  let
\begin{equation}\label{cf-entry12-transformed}
X(x)=\lim_{k\to\infty} \frac{P_k^*(x)}{P_k(x)}.
\end{equation}
Our objective in this section is to compute $X(x)$. 

%In addition, we will find another expression for $\mu^{\prime}$ by inverting the Stieltjes 
%transform of the measure. 

%Recall Markov's theorem, 
%which connects the continued fraction with the Stieltjes transform of the measure. 
% \begin{Proposition}[Markov, see {\cite[Th.\ 2.6.2]{MI2009}}]\label{prop:stieltjes} Assume that the true interval of orthogonality $[a,b]$ is bounded. Then
% $$\displaystyle
% \lim_{k\to\infty} \frac{P_k^*(x)}{P_k(x)}   =\int_a^b \frac{d\mu(t)}{x-t},$$ 
%  uniformly for $x\not\in \operatorname{supp}(\mu)$.
% \end{Proposition}
% 
%Markov's theorem indicates the points where the continued fraction converges. To compute the limits,
% we will again use Darboux's theorem. 
We will take 
 $x=\cos\vartheta$, so that  $$e^{\pm i\vartheta} = x\pm \sqrt{x^2-1}.$$
We choose a 
%{\color{red} (still not clear, should it be branch or sign)} 
branch %/sign
 of $\sqrt{x^2-1}$ in such a way that
$$\sqrt{x^2-1} \sim x, \text{ as } x\to\infty,$$
so that
$\left| e^{-i\vartheta}\right|<\left|e^{i\vartheta}\right|$
in the upper half plane, and 
$\left|e^{i\vartheta}\right|<\left|e^{-i\vartheta}\right|$
in the lower half plane. 
We use the notation  $\rho_1=e^{-i\vartheta}$ and $\rho_2=e^{i\vartheta}$. %We have $\rho_1\rho_2=1$. 

\begin{Theorem}\label{th:cf-values} Let $X(x)$ be the continued fraction in \eqref{cf-entry12-transformed}. Let $\rho_1$ and $\rho_2$ be as above.  Let $\gamma_1$ and $\gamma_2$ given by
\begin{equation}\label{gamma}
\gamma_1, \gamma_2 = \frac{aq}{2b}\big(c\pm \sqrt{c^2-1}\big),
\end{equation}
Let $F$ and $G$ be defined as follows:
\begin{align*}
F(\rho) &= \rPhis{2}{1}{2\gamma_1\rho, 2\gamma_2\rho}{q^2\rho^2}{\frac{bq}{a}}{q^2}
, \\
\intertext{and}
G(\rho) &= (1-{b}/{aq})\ 
\rPhis{2}{1}{2\gamma_1\rho, 2\gamma_2\rho}{q^2\rho^2}{\frac{b}{aq}}{q^2}.
\end{align*}
Then  $X(x)$ converges for all complex numbers $x\not\in (-1,1)$, except possibly a finite set of 
points,  and is given by
$$X(x)= 2\rho\frac{F(\rho)}{G(\rho)}, $$
where $\rho$ is given by:
$$\rho = 
\begin{cases}
 \rho_1, & \text{if } \Im(x)>0 , \text{ or } x > 1 \text{ ($x$ real)} \cr
 \rho_2,  & \text{if } \Im(x)<0, \text{ or } x < -1 \text{ ($x$ real)}  \cr
1, & \text{if } x = 1, \cr
 -1, & \text{if } x = -1. 
\end{cases}
$$
%Further, at $x=-1$ and $x=1$, the value of the continued fraction is given by:
\end{Theorem}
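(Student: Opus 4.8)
The plan is to reduce the computation of $X(x)$ to the value $H(\eta x)$ of the $J$-fraction already analyzed in Section~\ref{sec:proof2}, and then read off the answer from the generating functions $\widehat N(t)$ and $\widehat D(t)$ evaluated at the general argument $\eta x$ rather than at $x=1$. Since $N_k$ satisfies the same recurrence \eqref{entry12-3term} as $D_k$, the scaling that produced $P_k(x)=D_k(\eta x)/(\eta^k(1-ab)^k)$ also gives $P_k^{*}(x)=N_k(\eta x)/(\eta^{k-1}(1-ab)^k)$: indeed $N_k(\eta x)/(\eta^k(1-ab)^k)$ solves \eqref{entry12-3term-nevai} with initial values $0,1/\eta$, and multiplying by $\eta$ recovers the initial values $0,1$ of $P_k^{*}$. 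Hence
\begin{equation*}
X(x)=\lim_{k\to\infty}\frac{P_k^{*}(x)}{P_k(x)}=\eta\lim_{k\to\infty}\frac{N_k(\eta x)}{D_k(\eta x)}=\eta\,H(\eta x).
\end{equation*}
Because the factor $\pqrfac{bq/a}{k}{q^2}$ cancels in the ratio, I may equally compute $\lim_k \widehat N_k(\eta x)/\widehat D_k(\eta x)$ from the explicit generating functions of Section~\ref{sec:proof2}.

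Next I would locate the dominant singularity. With $x$ replaced by $\eta x=\eta\cos\vartheta$, the quadratic $1-(1-ab)(\eta x)t-abt^2=(1-\delta_1 t)(1-\delta_2 t)$ has $\delta_1\delta_2=-ab$ and $\delta_1+\delta_2=(1-ab)\eta\cos\vartheta=2\sqrt{-ab}\cos\vartheta$, using $\eta(1-ab)=2\sqrt{-ab}$. Thus $\{\delta_1,\delta_2\}=\{\sqrt{-ab}\,\rho_1,\sqrt{-ab}\,\rho_2\}$ with $\rho_1\rho_2=1$, and the branch convention for $\sqrt{x^2-1}$ selects which of the simple poles $1/\delta_1,1/\delta_2$ is nearest the origin: in the upper half-plane and for $x>1$ one has $|\rho_1|<|\rho_2|$, so the nearest singularity is $t^{*}=1/(\sqrt{-ab}\,\rho_2)=\rho_1/\sqrt{-ab}$, i.e.\ $\rho=\rho_1$; the lower half-plane and $x<-1$ give $\rho=\rho_2$ symmetrically. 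Applying Darboux's method with the single-pole comparison function obtained by freezing every factor of $\widehat N(t)$ (resp.\ $\widehat D(t)$) at $t=t^{*}$ except $1/(1-\delta_{\max}t)$, I obtain $\widehat N_k(\eta x)\sim C_N\delta_{\max}^{k}$ and $\widehat D_k(\eta x)\sim C_D\delta_{\max}^{k}$; the common factor $\delta_{\max}^{k}$ and the residual factor $1-\delta_{\min}t^{*}$ cancel in the ratio, leaving
\begin{equation*}
X(x)=\eta\,\frac{(1-ab)\,t^{*}\,\Phi_N(t^{*})}{(1-b/aq)\,\Phi_D(t^{*})},
\end{equation*}
where $\Phi_N(t)$ and $\Phi_D(t)$ denote the infinite sums over $m$ appearing in $\widehat N(t)$ and $\widehat D(t)$.

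The remaining step is algebraic identification. The prefactor simplifies at once: $\eta(1-ab)t^{*}=2\sqrt{-ab}\cdot\rho/\sqrt{-ab}=2\rho$. For the lower parameters, substituting $t^{*}=\rho/\sqrt{-ab}$ gives $\delta_{\max}q^2t^{*}=q^2$ and $\delta_{\min}q^2t^{*}=(\delta_{\min}/\delta_{\max})q^2=q^2\rho^2$, so both $\Phi$'s carry the lower data $q^2,q^2\rho^2$. For the upper parameters $a^2qt^{*}$ and $-aqt^{*}/b$, I would invoke the definitions of $\eta$, $c$ and $\gamma_1,\gamma_2$ together with the identities $c+\sqrt{c^2-1}=-\sqrt{-ab}$ and $c-\sqrt{c^2-1}=-1/\sqrt{-ab}$, immediate from $c=-(1-ab)/(2\sqrt{-ab})$; these yield $a^2qt^{*}=2\gamma_1\rho$ and $-aqt^{*}/b=2\gamma_2\rho$. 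After these substitutions $\Phi_N(t^{*})=F(\rho)$ and $(1-b/aq)\Phi_D(t^{*})=G(\rho)$, so $X(x)=2\rho\,F(\rho)/G(\rho)$.

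Finally, for the convergence claim and the case analysis, note that Darboux's method applies cleanly exactly when the two poles have distinct moduli, i.e.\ $|\rho_1|\neq|\rho_2|$, which holds precisely for $x\notin[-1,1]$; on the open interval $(-1,1)$ the poles are equidistant (this is the support of the orthogonality measure) and the $J$-fraction need not converge. At the endpoints $x=\pm1$ one has $\vartheta=0,\pi$ and $\rho_1=\rho_2=\pm1$, so $t^{*}$ becomes a double pole; here I would either use the double-pole comparison function, whose $k$th coefficient behaves like $(k+1)\delta^{k}$ with the extra factor cancelling in the ratio, or argue by continuity of $2\rho\,F(\rho)/G(\rho)$ as $\rho\to\pm1$, recovering the stated value. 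The finitely many excepted points are the zeros of $G(\rho)$ and the values with $q^2\rho^2=q^{-2m}$ at which $\Phi_D(t^{*})$ or the comparison function degenerates. I expect the main obstacle to be precisely this boundary bookkeeping—fixing the branch of $\sqrt{x^2-1}$ so that the piecewise definition of $\rho$ emerges correctly, and handling the degenerate double-pole case $x=\pm1$—rather than the parameter matching, which is routine once the dominant singularity is pinned down.
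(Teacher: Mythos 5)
Your proposal is correct and amounts to essentially the same proof as the paper's: Darboux's method applied to the explicit generating functions, with the dominant simple pole $t^{*}$ selected by the branch of $\sqrt{x^2-1}$ for $x\notin[-1,1]$, and a separate double-pole comparison (leading coefficient $(k+1)\delta^{k}$, constants cancelling in the ratio) at $x=\pm1$, exactly as in the paper. The only difference is organizational rather than mathematical: you transport Section~3's generating functions $\widehat N,\widehat D$ via the reduction $X(x)=\eta H(\eta x)$ (equivalently, the paper's $Q(t)$ is $\widehat D\bigl(t/(2\sqrt{-ab})\bigr)$ at argument $\eta x$), whereas the paper re-derives $Q(t)$ and $Q^{*}(t)$ from a $q$-difference equation and iterates; your parameter matching $\delta_{\max}q^2t^{*}=q^2$, $\delta_{\min}q^2t^{*}=q^2\rho^2$, $a^2qt^{*}=2\gamma_1\rho$, $-aqt^{*}/b=2\gamma_2\rho$ reproduces the paper's $F(\rho)$ and $G(\rho)$, and your convergence bookkeeping (limit exists off the zeros of $G$) plays the role the paper assigns to Markov's theorem in its closing remarks.
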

%For Presentation
%\newpage
%Let $\rho_1=e^{-i\vartheta}$ $\rho_2=e^{i\vartheta}$. %We have $\rho_1\rho_2=1$. 
%\subsection*{Theorem}
%Let $\gamma_1$ and $\gamma_2$ given by
%\begin{equation*}\label{gamma}
%\gamma_1, \gamma_2 = \frac{aq}{2b}\big(c\pm \sqrt{c^2-1}\big),
%\end{equation*}
%Let $F$ and $G$ be defined as follows:
%\begin{align*}
%F(\rho) &= \rPhis{2}{1}{2\gamma_1\rho, 2\gamma_2\rho}{q^2\rho^2}{\frac{bq}{a}}{q^2}
%, \\
%\intertext{and}
%G(\rho) &= (1-{b}/{aq})\ 
%\rPhis{2}{1}{2\gamma_1\rho, 2\gamma_2\rho}{q^2\rho^2}{\frac{b}{aq}}{q^2}.
%\end{align*}
%Then  $X(x)$ converges for all complex numbers $x\not\in (-1,1)$, except possibly a finite set of 
%points,  and is given by
%$$X(x)= 2\rho\frac{F(\rho)}{G(\rho)}, $$
%where $\rho$ is given by:
%$$\rho = 
%\begin{cases}
% \rho_1, & \text{if } \Im(x)>0 , \text{ or } x > 1 \text{ ($x$ real)} \cr
% \rho_2,  & \text{if } \Im(x)<0, \text{ or } x < -1 \text{ ($x$ real)}  \cr
%1, & \text{if } x = 1, \cr
% -1, & \text{if } x = -1. 
%\end{cases}
%$$
%Further, at $x=-1$ and $x=1$, the value of the continued fraction is given by:
%\end{Theorem}
%EndPresentation
\begin{proof}
We will comment on the convergence of $X(x)$ shortly. We first compute asymptotic formulas for $P^*_k(x)$ and $P_k(x)$ in the upper half plane. 

Consider
$$Q_k(x) =\frac{P_k(x)}{\pqrfac{bq/a}{k}{q^2}}.$$
Then $Q_k(x)$ satisfies
\begin{multline}\label{Q-3term}
xQ_{k}(x) =(1-bq^{2k+1}/a) Q_{k+1}(x) +cq^{2k}  Q_k(x) 
\cr 
+ \frac{1}{4}(1-aq^{2k-1}/b) Q_{k-1}(x), \text{ for } k > 0,
\end{multline}
where $$Q_0(x)=1, Q_1(x)=\frac{(x-c)}{(1-bq/a)}.$$

We will apply Darboux's method to find an asymptotic formula for $Q_k(x)$, from which we will get the required formula for $P_k(x)$.
Let $Q(t)$ denote the generating function of $Q_k(x)$, that is, let
$$Q(t):= \sum_{k=0}^\infty Q_k(x)t^k.$$
Multiply \eqref{Q-3term} by $t^{k+1}$ and sum over $k\geq 0$ to find that
$$Q(t) = \frac{(1-b/aq)}{1-xt+t^2/4} + \frac{(1- acqt/b+a^2q^2t^2/4b^2)}{1-xt+t^2/4} 
\Big(\frac{b}{aq}\Big) Q(tq^2).$$
We change the variable by taking $$x=\frac{e^{i\vartheta}+e^{-i\vartheta}}{2} \text{ } (= \cos \vartheta)$$
so
\begin{align*}
1-xt+t^2/4 &= (1-e^{-i\vartheta}t/2)(1-e^{i\vartheta}t/2)\cr
&= (1-\rho_1 t/2)(1-\rho_2 t/2) \text{ (say)}.
\end{align*}
Let $\gamma_1$, $\gamma_2$ be such that
$$(1- acqt/b+a^2q^2t^2/4b^2) = (1-\gamma_1 t)(1-\gamma_2 t),$$
so
$$\gamma_1, \gamma_2 = \frac{aq}{2b}\big(c\pm \sqrt{c^2-1}\big).$$

Using $\rho_1$, $\rho_2$, $\gamma_1$  and $\gamma_2$, we can write the $q$-difference equation for $Q(t)$ in the form 
%that it can be iterated easily.
\begin{align*}
Q(t) &= \frac{(1-b/aq)}{(1-\rho_1 t/2)(1-\rho_2 t/2)} 
+
 \frac{(1-\gamma_1 t)(1-\gamma_2 t)}
{(1-\rho_1 t/2)(1-\rho_2 t/2) } \Big(\frac{b}{aq}\Big) 
Q(tq^2) .
\end{align*}
On iteration, we obtain
\begin{equation*} %\label{Q-gf}
Q(t) = \frac{(1-b/aq)}{(1-\rho_1 t/2)(1-\rho_2 t/2)} 
\sum_{k=0}^{\infty} \frac{\pqrfac{\gamma_1 t, \gamma_2 t}{k}{q^2} }
{\pqrfac{\rho_1 q^2 t/2, \rho_2 q^2t/2 }{k}{q^2}}
\Big( \frac{b}{aq}\Big)^{k}.
\end{equation*}
Note that the poles of $Q(t)$ are at  
$$t=2\rho_1, 2\rho_1/q^2, 2\rho_1/q^4,\dots; \text{ and }
t=2\rho_2, 2\rho_2/q^2, 2\rho_2/q^4\dots.$$
Since $0<|q|<1 $, the pole nearest to $t=0$ is at $t=2\rho_1$ or
$t=2\rho_2$. 

Next let
$$Q_k^*(x)=\frac{P^*_k(x)}{\pqrfac{bq/a}{k}{q^2}},$$ 
so $Q^*_k(x)$ satisfies \eqref{Q-3term} with the initial conditions: $$Q_0^*(x) =0 \text{ and }
 Q_1^*(x) =1/(1-bq/a).$$ 

As before, 
%let $\rho_1=e^{-i\vartheta}$ and $\rho_2=e^{i\vartheta}$, and
let $\gamma_1$ and $\gamma_2$ be given by \eqref{gamma}. %We have $\rho_1\rho_2=1$. 
%We first compute an asymptotic formulas for $Q_k^*(x)$ and $Q_k(x)$ in the upper half plane. It is not difficult to see that 
The generating function %$Q^*(t)$ 
of
 $Q_k^{*}$ is given by
\begin{equation*}%\label{Qstar-gf}
Q^*(t) = \frac{t}{(1-\rho_1 t/2)(1-\rho_2 t/2)} 
\sum_{k=0}^{\infty} \frac{\pqrfac{\gamma_1 t, \gamma_2 t}{k}{q^2} }
{\pqrfac{\rho_1 q^2t/2, \rho_2 q^2t/2 }{k}{q^2}}
\Big( \frac{bq}{a}\Big)^{k}.
\end{equation*}

In the upper half-plane, the singularity nearest the origin is at $t=2\rho_1$. 
Let $S^*(t)$ be the series
\begin{equation*}%
{S^*}(t)= 
\frac{2\rho_1}{(1-\rho_1^2)(1-\rho_2 t/2)}
\sum_{m=0}^{\infty} 
\frac{\pqrfac{2\gamma_1\rho_1, 2\gamma_2\rho_1}{m}{q^2}}
{\pqrfac{q^2, q^2\rho_1^2 }{m}{q^2}}
\Big( \frac{bq}{a}\Big)^{m}.
\end{equation*}
and note that $Q^*(t)-{S^*}(t)$ has a removable singularity at $t=2\rho_1.$
By Darboux's method, 
we have
$$Q_k^*(x) \sim 
\frac{2\rho_1 \rho_2^k}{2^k(1-\rho_1^2) }
\sum_{m=0}^{\infty} 
\frac{\pqrfac{2\gamma_1\rho_1, 2\gamma_2\rho_1}{m}{q^2}}
{\pqrfac{q^2, q^2\rho_1^2 }{m}{q^2}}
\Big( \frac{bq}{a}\Big)^{m}
= \frac{2\rho_1 \rho_2^k}{2^k(1-\rho_1^2) } F(\rho_1) \text{ (say)}. $$
Similarly, considering the generating function of $Q_k(x)$  in the upper half plane, we find that
$$Q_k (x) \sim 
\frac{\rho_2^k(1-b/aq)}{2^k(1-\rho_1^2) }
\sum_{m=0}^{\infty} 
\frac{\pqrfac{2\gamma_1\rho_1, 2\gamma_2\rho_1}{m}{q^2}}
{\pqrfac{q^2, q^2\rho_1^2 }{m}{q^2}}
\Big( \frac{b}{aq}\Big)^{m}
= \frac{\rho_2^k}{2^k(1-\rho_1^2) } G(\rho_1) \text{ (say)}. $$

Thus in the upper half-plane, we find that
$$X(x)=\lim_{k\to\infty} \frac{P_k^*(x)}{P_k(x)} =
\lim_{k\to\infty} \frac{Q_k^*(x)\pqrfac{bq/a}{k}{q^2}}{Q_k(x) \pqrfac{bq/a}{k}{q^2}} 
=
2\rho_1\frac{F(\rho_1)}{G(\rho_1)}.$$
The same calculation works for $x>1$, when $x$ is real. In the statement of our theorem, we write 
$F$ and $G$ in terms of the $_r\phi_s$ notation. 

In the lower half-plane, since $t=2\rho_2$ is the singularity nearest to the origin, 
a similar calculation yields
$$X(x)=\lim_{k\to\infty} \frac{P_k^*(x)}{P_k(x)} =2\rho_2\frac{F(\rho_2)}{G(\rho_2)}.$$ 
This is also valid for $x<-1$, for $x$ real.

For $x=1$, we find that the dominating term of the comparison function for the generating function of $P^*_k(1)$ is given by
\begin{equation*}%
{S^*}(t)= 
\frac{2}{(1-t/2)^2}
\sum_{m=0}^{\infty} 
\frac{\pqrfac{2\gamma_1\rho_1, 2\gamma_2\rho_1}{m}{q^2}}
{\pqrfac{q^2, q^2\rho_1^2 }{m}{q^2}}
\Big( \frac{bq}{a}\Big)^{m}.
\end{equation*}
(There is another term of the form $A/(1-t/2)$, but that does not have any contribution.) By Darboux's theorem, we find that
$$Q^*_k (1) \sim \frac{2(k+1)}{2^k}F(1).$$
Similarly, we have
$$Q_k (1) \sim \frac{(k+1)}{2^k}G(1).$$ 
Thus 
$$X(1) = 2 \frac{F(1)}{G(1)}.$$
A similar calculation is required for $x=-1$. 
\end{proof}

Recall that the $P_k(x)$ satisfy the three term recurrence relation \eqref{three-term2}, with $\alpha_k$ and $\beta_k$ real, bounded, and $\beta_k>0$ for $k>0$. This implies that the polynomials $P_k(x)$ are orthogonal with respect to a positive measure $\mu$. Further, the support of $\mu$ is bounded. From Blumenthal's theorem \cite[Theorem IV-3.5, p.~117]{chihara1978} it follows that $\mu$ has an absolutely continuous component $\mu^\prime$. 
We close this section with some further remarks on the orthogonality measure.

\begin{Remarks}\ 
\begin{enumerate}
\item We can apply Nevai's theorem \cite[Theorem 40, p.~143]{PN1979} to obtain an expression for $\mu^\prime$ supported on $(-1,1)$.
This requires the use of Darboux's theorem as above. For an example of a very similar calculation, in the context of another continued fraction of Ramanujan, see \cite[\S 3]{BI2019a}. 
\item Nevai's theorem further states that if $\mu$ has a discrete part, it lies outside $(-1,1)$. Since the measure is bounded, there are only a finite set of isolated mass points of $\mu$ outside $(-1,1)$. 
\item Markov's theorem \cite[Theorem 2.6.2]{MI2009} implies that the value of the continued fraction $X(x)$ is the Stieltjes transform of the measure for $x\not \in (a,b)$, where $(a,b)$ is the true interval
of orthogonality. 
 %$x\not\in (-1,1)$. 
 This implies that the continued fraction $X(x)$ converges for all complex numbers $x\not\in (-1,1)$, except possibly a finite set of points, as asserted by Theorem~\ref{th:cf-values}.
\item As done in \cite[\S 4]{BI2019a}, we can obtain another expression for $\mu$ by inverting the Stieltjes transform, which is equal to $X(x)$ in Theorem~\ref{th:cf-values}. 
\item See also the remarks after Theorem~4.1 in \cite{BI2019a}.
\end{enumerate}
The details of the analysis mentioned above are very similar to the analogous calculations in \cite{BI2019a}, and are omitted.
\end{Remarks}
\subsection*{Acknowledgments} This work was done at the sidelines of conferences and summer schools organized by the members of the Orthogonal Polynomials and Special Functions (OPSF) group of SIAM. We thank the organizers of the following: Summer research institute on $q$-series, (July-Aug 2018), Chern Institute of Mathematics, Nankai University, Tianjin, PR China and OPSFA2019, Hagenberg, Austria. The research of the first author was supported by a grant of the Austrian Science Fund (FWF): F 50-N15.

%\section{Pending Agenda}
%\begin{enumerate}
%\item How about looking at \eqref{entry12-recursion} and checking whether on can get an $m$-version type identity?
%\end{enumerate}
%

%\bibliography{../references}{}
%\bibliographystyle{abbrv}

%\bibliographystyle{amsplain}

\end{document}